\newtheorem{theorem}{Theorem}
\newtheorem{lemma}[theorem]{Lemma}
\newtheorem{proposition}[theorem]{Proposition}
\newtheorem{corollary}[theorem]{Corollary}
\newtheorem{prob}[theorem]{Problem}
\newtheorem{rem}[theorem]{Remark}
\newenvironment{proof}[1][Proof]{\begin{trivlist}
\item[\hskip \labelsep {\bfseries #1}]}{\end{trivlist}}
\newenvironment{definition}[1][Definition]{\begin{trivlist}
\item[\hskip \labelsep {\bfseries #1}]}{\end{trivlist}}
\newcommand{\qed}{\nobreak \ifvmode \relax \else
      \ifdim\lastskip<1.5em \hskip-\lastskip
      \hskip1.5em plus0em minus0.5em \fi \nobreak
      \vrule height0.75em width0.5em depth0.25em\fi}
\title{On separators of the space of complete non-negatively curved metrics on the plane}
\date{}
\begin{document}

\maketitle

\begin{abstract}We shall prove that the Hilbert cube cannot be separated by  a weakly infinite dimensional subset. As a corollary we obtain that the complement of a weakly infinite dimensional subset of the space of complete non negatively curved metrics is  continuum connected. We can extend this result to the associated moduli space when the set removed is a Hausdorff space with Haver's property $\mathcal{C}$. These results are refinements of  theorems proven by  Belegradek and Hu~\cite{BH}.
\end{abstract}

\section{Introduction}

The spaces of Riemannian metrics with positive scalar curvature are subjects of intensive study~\cite{Ro}.
The  connectedness properties of such spaces on $\mathbb R^2$ were studied recently by Belegradek and Hu in~\cite{BH}.
They proved that in the space $\mathcal{R}^{k}_{\ge 0}(\mathbb{R}^2)$  of complete Riemannian metrics of non negative curvature on the plane equipped with the  topology of $C^k$ uniform convergence on compact sets, the complement $\mathcal{R}^{k}_{\ge 0}(\mathbb{R}^2)\setminus X$ is connected for every finite dimensional $X$. 
We note that the space $\mathcal{R}^{k}_{\ge 0}(\mathbb{R}^2)$ is separable metric~\cite{BH}.
In this note we extend Belegradek-Hu's result to the case of infinite dimensional spaces $X$.
We recall that infinite dimensional spaces  split in two disjoint classes: strongly infinite dimensional (like the Hilbert cube)
and weakly infinite dimensional (like the union of $\cup_n I^n)$.
We prove Belegradek-Hu's theorem for weakly infinite dimensional $X$. This extension is final since strongly infinite dimensional spaces can separate the Hilbert cube.

We note that in~\cite{BH} there is a similar connectedness result with finite dimensional $X$ for the moduli spaces $\mathcal{M}^{k,c}_{\ge 0}(\mathbb{R}^2)$, i.e. the quotient space of $\mathcal{R}^{k}_{\ge 0}(\mathbb{R}^2)$ by the ${\rm Diff}(\mathbb R^2)$-action via pullback. In the case of moduli spaces we  manage to extend their connectedness result to the subsets $X\subset\mathcal{M}^{k}_{\ge 0}(\mathbb{R}^2)$ with Haver's property $\mathcal{C}$
(called $\mathcal{C}$-spaces in~\cite{En}). It is known that the property $\mathcal{C}$ implies the weak infinite dimensionality~\cite{En}. 
There is an old open problem whether every weakly infinite dimensional compact metric space has property $\mathcal{C}$. For general spaces these two classes are different~\cite{AG}.

\section{Infinite dimensional spaces}

We denote the \textit{Hilbert cube} by $Q=[-1,1]^{\infty}=\Pi_{n=1}^{\infty}\mathbb{I}_n$. The \textit{pseudo interior} of $Q$ is the set $s=(-1,1)^{\infty}$ and the \textit{pseudo boundary} of $Q$ is the set $B(Q)=Q\setminus s$. The \textit{faces} of $Q$ are the sets $W_i^{-}=\{x\in Q|x_i=-1\}$ and  $W_i^{+}=\{x\in Q|x_i=1\}$.  Every space under consideration is a separable metric space. \\
A space $S\subseteq X$ is said to \textit{separate} $X$ if $X\setminus S$ is disconnected. Let $X$ be a space and let $A, B$ be two disjoint closed subsets of $X$, a \textit{separator between $A$ and $B$} is a closed subset $S\subseteq X$ such that $X\setminus S$ can be written as the disjoint union of open sets $U$ and $V$ with $A\subseteq U$ and $B\subseteq V$. 

\begin{definition}
Let $X$ be a space and $\Gamma$  be an index set. A family of  pairs of disjoint closed sets $\tau=\{(A_i,B_i): i\in \Gamma\}$ of $X$ is said to be \textit{essential} if for every family $\{L_i:i\in \Gamma\}$ where $L_i$ is a separator between $A_i$ and $B_i$, we have $\bigcap_{i\in\Gamma}L_i\neq \emptyset$.\\
If $\tau$ is not essential, then it is called \textit{inessential}. 
\end{definition}

We recall that the classical covering dimension can be defined in terms of essential families as follows:
\begin{definition}
For a space $X$ we define, $\textrm{dim} X \in \{-1, 0, 1, \ \cdots\}\cup\{\infty\}$ by\\
\indent $\textrm{dim} X=-1$ iff $X=\emptyset$\\
\indent $\textrm{dim} X\le n$ iff every family of $n+1$ pairs of disjoint closed subsets is\\ \indent\indent\indent\indent inessential.\\
\indent $\textrm{dim} X=n$ iff  $\textrm{dim} X\le n$ and  $\textrm{dim} X\nleq n-1$\\
\indent $\textrm{dim} X=\infty$ iff  $\textrm{dim} X\neq n$ for all $n\ge -1$\\\\
A space $X$ is called \textit{strongly infinite dimensional} if there exists an infinite essential family of pairs of disjoint closed subsets of $X$. $X$ is called \textit{weakly infinite dimensional} if  $X$ is not strongly infinite dimensional.
\end{definition}

We recall that a space $X$ is \emph{continuum connected} if every two points $x, y\in X$ are contained in a connected compact subset.

The following fact is well known. A proof can be found in~\cite{vM}, Corollary 3.7.5. 

\begin{lemma}\label{thm1V} 
Let $X$ be a compact space, let $\{A_i,B_i:i\in\Gamma\}$ be an essential family of pairs of disjoint closed subsets of $X$ and let $n\in\Gamma$. Suppose that $S\subseteq X$ is such that $S$ meets every continuum from $A_n$ to $B_n$. For each $i\in\Gamma\setminus\{n\}$ let $U_i$ and $V_i$ be disjoint closed neighborhoods of $A_i$ and $B_i$ respectively. Then $\{(U_i\cap S, V_i\cap S)\}$ is essential in $S$.
\end{lemma}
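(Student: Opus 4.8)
The plan is to reduce the essentiality of the family $\tau'=\{(U_i\cap S,\,V_i\cap S):i\in\Gamma\setminus\{n\}\}$ in $S$ to the assumed essentiality of $\tau=\{(A_i,B_i):i\in\Gamma\}$ in $X$. So I would start from an arbitrary admissible choice of separators: for each $i\neq n$ let $L_i\subseteq S$ be a separator in $S$ between $U_i\cap S$ and $V_i\cap S$, and write $S\setminus L_i=P_i\sqcup Q_i$ with $P_i,Q_i$ relatively open in $S$, $U_i\cap S\subseteq P_i$ and $V_i\cap S\subseteq Q_i$. The aim is to exhibit a point of $\bigcap_{i\neq n}L_i$. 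Recall that under the standing convention all spaces are separable metric, so $X$ is compact metric and I may freely use metrizability.

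The first, and technically most delicate, step is an \emph{extension of separators}: I claim each $L_i$ ($i\neq n$) is the trace on $S$ of a separator $M_i$ of $X$ between $A_i$ and $B_i$, with $M_i\cap S\subseteq L_i$. To build $M_i$ I would show that $A_i\cup P_i$ and $B_i\cup Q_i$ are \emph{separated} in $X$, i.e.\ $\overline{A_i\cup P_i}\cap(B_i\cup Q_i)=\emptyset$ and $(A_i\cup P_i)\cap\overline{B_i\cup Q_i}=\emptyset$. This is where the disjoint closed neighborhoods are used: since $P_i,Q_i\subseteq S$, we have $P_i\cap V_i=P_i\cap(V_i\cap S)\subseteq P_i\cap Q_i=\emptyset$, hence $P_i\subseteq X\setminus V_i$ and (as $B_i\subseteq\mathrm{int}\,V_i$) $\overline{P_i}\cap B_i=\emptyset$; symmetrically $\overline{Q_i}\cap A_i=\emptyset$; moreover $\overline{P_i}\cap Q_i=\overline{P_i}^{\,S}\cap Q_i=\emptyset$ because $Q_i$ is relatively open and disjoint from $P_i$, and likewise $\overline{Q_i}\cap P_i=\emptyset$, while $A_i\cap Q_i=A_i\cap B_i=\emptyset$ follow from $A_i\subseteq U_i$ and $Q_i\cap U_i=\emptyset$. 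In a metric space separated sets admit disjoint open neighborhoods (e.g.\ via $\{x:d(x,\,\cdot\,)<d(x,\,\cdot\,)\}$), so I obtain disjoint open $\mathcal P_i\supseteq A_i\cup P_i$ and $\mathcal Q_i\supseteq B_i\cup Q_i$ and set $M_i:=X\setminus(\mathcal P_i\cup\mathcal Q_i)$. Then $M_i$ is a separator between $A_i$ and $B_i$, and since $M_i$ misses $P_i\cup Q_i$ we get $M_i\cap S\subseteq S\setminus(P_i\cup Q_i)=L_i$.

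Next I would put $K:=\bigcap_{i\neq n}M_i$ and feed in the index $n$ through the essentiality of $\tau$. For an \emph{arbitrary} separator $M_n$ of $X$ between $A_n$ and $B_n$, the family $\{M_i:i\in\Gamma\}$ is a choice of separators for the essential family $\tau$, so $\bigcap_{i\in\Gamma}M_i=K\cap M_n\neq\emptyset$; thus $K$ meets every separator between $A_n$ and $B_n$. Here I invoke the classical duality between partitions and continua in compacta (as in~\cite{vM}): a closed set meeting every separator between $A_n$ and $B_n$ must contain a continuum joining them. Concretely, if no subcontinuum of $K$ met both $K\cap A_n$ and $K\cap B_n$, then no component of the compactum $K$ would meet both, so $K$ would split into relatively clopen $K_1\sqcup K_2$ with $K\cap A_n\subseteq K_1$, $K\cap B_n\subseteq K_2$; enclosing the disjoint closed sets $A_n\cup K_1$ and $B_n\cup K_2$ in disjoint open sets by normality would yield a separator between $A_n$ and $B_n$ avoiding $K$, a contradiction. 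Hence there is a continuum $C\subseteq K$ with $C\cap A_n\neq\emptyset$ and $C\cap B_n\neq\emptyset$.

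Finally I combine this with the hypothesis on $S$. Since $C$ is a continuum from $A_n$ to $B_n$, the set $S$ meets it, so I may pick $p\in S\cap C\subseteq S\cap K$. Because $p\in K=\bigcap_{i\neq n}M_i$ and $p\in S$, the extension property gives $p\in M_i\cap S\subseteq L_i$ for every $i\neq n$, whence $p\in\bigcap_{i\neq n}L_i$. As the $L_i$ were an arbitrary family of separators, $\tau'$ is essential in $S$. I expect the two genuine obstacles to be exactly the two structural ingredients above: verifying that $A_i\cup P_i$ and $B_i\cup Q_i$ are separated so that the separators extend with controlled trace (this is precisely where the disjointness of the closed neighborhoods $U_i,V_i$ is indispensable), and the barrier-to-continuum theorem that converts "$K$ meets every separator" into a genuine continuum lying inside $K$; the remaining bookkeeping is routine.
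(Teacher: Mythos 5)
Your proof is correct. The paper gives no proof of this lemma at all---it simply cites \cite{vM}, Corollary 3.7.5---and your argument (extending the relative separators $L_i$ to separators $M_i$ of $X$ with $M_i\cap S\subseteq L_i$ via the separated sets $A_i\cup P_i$ and $B_i\cup Q_i$, using essentiality to force a continuum from $A_n$ to $B_n$ inside $K=\bigcap_{i\neq n}M_i$ by the quasi-component argument, and then hitting that continuum with $S$) is precisely the standard proof behind that reference, so it matches the intended approach and the only steps you leave to citation (complete normality of metric spaces, components equal quasi-components in compacta) are indeed classical.
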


\begin{corollary}\label{cor}
Let $S\subseteq Q$ be such that $S$ meets every continuum from $W_1^+$ to $W_1^-$. Then $S$ is strongly infinite dimensional.\qed
\end{corollary}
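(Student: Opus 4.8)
The plan is to feed Lemma~\ref{thm1V} the canonical essential family of opposite faces of the Hilbert cube and simply read off the conclusion. Recall that the family $\tau=\{(W_i^+,W_i^-):i\in\mathbb{N}\}$ is an \emph{infinite} essential family of pairs of disjoint closed subsets of the compact space $Q$; this is precisely the classical fact witnessing the strong infinite dimensionality of $Q$ (see~\cite{vM}). I would take this as the input essential family $\{(A_i,B_i):i\in\Gamma\}$ of Lemma~\ref{thm1V}, with $\Gamma=\mathbb{N}$, $A_i=W_i^+$ and $B_i=W_i^-$, and single out the index $n=1$.

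With these identifications, the hypothesis of the corollary --- that $S$ meets every continuum from $W_1^+$ to $W_1^-$ --- is literally the hypothesis of Lemma~\ref{thm1V} for the index $n=1$, namely that $S$ meets every continuum from $A_1$ to $B_1$. For each remaining index $i\in\Gamma\setminus\{1\}$, I would use normality of the compact metric space $Q$ to choose disjoint closed neighborhoods $U_i\supseteq W_i^+$ and $V_i\supseteq W_i^-$; these exist because $W_i^+$ and $W_i^-$ are disjoint closed sets. Lemma~\ref{thm1V} then asserts that the family $\{(U_i\cap S,\,V_i\cap S):i\in\Gamma\setminus\{1\}\}$ is essential in $S$.

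Finally I would observe that $\Gamma\setminus\{1\}$ is infinite, so the family just produced is an infinite essential family of pairs of disjoint closed subsets of $S$. By the definition of strong infinite dimensionality, this is exactly what it means for $S$ to be strongly infinite dimensional, which completes the argument.

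The only step that is not a mechanical substitution is the claim that the opposite faces $\{(W_i^+,W_i^-)\}$ form an essential family in $Q$, so I expect that to be the point deserving most care; it is, however, entirely standard, being equivalent to the strong infinite dimensionality of $Q$ and provable by a Brouwer-type argument on the coordinate projections. Since the corollary needs only this one classical fact together with Lemma~\ref{thm1V}, no further machinery is required.
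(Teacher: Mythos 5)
Your proof is correct and is exactly the argument the paper intends: the corollary is stated with an immediate \qed precisely because it follows from Lemma~\ref{thm1V} applied to the classical essential family of opposite faces $\{(W_i^+,W_i^-)\}$ of $Q$ with $n=1$, yielding an infinite essential family in $S$. Your write-up merely makes explicit the standard ingredients (essentiality of the face family and the choice of disjoint closed neighborhoods), so there is nothing to add.
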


A set $A\subseteq Q$ is a $\mathcal{Z}$-set in $Q$ if for every open cover $\mathcal{U}$ of $X$ there exists a map of $Q$ into $Q\setminus A$ which is $\mathcal{U}$-close to the identity. It should be noted that any face of $Q$ and any point of $Q$ are $\mathcal{Z}$-sets. The following Theorem is from ~\cite{Ch}, Theorem 25.2.\\

\begin{theorem}\label{chap} Let $A, B\subseteq Q$ be $\mathcal{Z}$-sets such that $\textrm{Sh}(A)=\textrm{Sh}(B)$. Then $Q\setminus A$ is homeomorphic to $Q\setminus B$. 
\end{theorem}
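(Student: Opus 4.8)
The plan is to view both complements as non-compact $Q$-manifolds and to reduce the statement to the topological classification of $Q$-manifolds by proper homotopy type. Since $A$ and $B$ are $\mathcal{Z}$-sets, the open sets $Q\setminus A$ and $Q\setminus B$ are $Q$-manifolds. Two pillars of $Q$-manifold theory will be used: the triangulation theorem, which represents every $Q$-manifold as $|K|\times Q$ for a locally finite polyhedron $K$, and the principle that a proper homotopy equivalence between $Q$-manifolds carrying no torsion obstruction is homotopic to a homeomorphism. Granting these, it suffices to produce a proper homotopy equivalence $Q\setminus A\to Q\setminus B$ and to verify that it is simple.

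First I would localize the $\mathcal{Z}$-sets geometrically. By the mapping cylinder neighborhood theorem, $A$ has a closed neighborhood $C_A$ homeomorphic, rel $A$, to the mapping cylinder $\mathrm{Cyl}(p_A)$ of a map $p_A\colon M_A\to A$, where $M_A=\partial C_A$ is a compact $Q$-manifold. Writing $W_A=Q\setminus\operatorname{int}C_A$, this exhibits $Q\setminus A$ as $W_A$ with an exterior open collar $M_A\times[0,1)$ attached along $M_A$; in particular $Q\setminus A$ deformation retracts onto the compact $Q$-manifold-with-boundary $W_A$, while its end structure is described by the inverse system of sub-cylinder complements. The decreasing neighborhoods of $A$ inside $C_A$ all deformation retract to $A$, so this inverse system recovers exactly $\mathrm{Sh}(A)$. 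The same construction is performed for $B$.

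Next I would convert the shape equivalence into a proper homotopy equivalence. Because $Q$ is an AR and $A,B$ are compacta, the relation $\mathrm{Sh}(A)=\mathrm{Sh}(B)$ is witnessed by homotopy-commuting ladders of maps between cofinal neighborhood systems of $A$ and of $B$; restricting these maps to the punctured neighborhoods $C_A\setminus A$ and $C_B\setminus B$ and using the collar structures, one assembles them into a proper map $Q\setminus A\to Q\setminus B$ inducing the shape isomorphism at infinity. A symmetric construction gives a proper homotopy inverse, so $Q\setminus A$ and $Q\setminus B$ are properly homotopy equivalent; feeding this equivalence into the classification of $Q$-manifolds then yields the desired homeomorphism.

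The main obstacle is the last step: a proper homotopy equivalence of $Q$-manifolds is homotopic to a homeomorphism only after its infinite Whitehead torsion, concentrated at the end, is shown to vanish or to be absorbable. Controlling this torsion is the crux, and it is precisely here that one invokes the topological invariance of Whitehead torsion together with West's theorem that the compact ANRs $A$ and $B$ have finite homotopy type, which forces the relevant finiteness and torsion obstructions to cancel. The subtler bookkeeping is thus homotopical rather than geometric: faithfully transporting the shape data sitting on the removed sets into controlled proper-homotopy data on their complements, and checking simplicity of the resulting equivalence.
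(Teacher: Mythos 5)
You should first note that the paper does not prove this statement at all: it is imported verbatim from Chapman's \emph{Lectures on Hilbert cube manifolds} (Theorem 25.2), so your proposal has to be measured against Chapman's argument. Against that benchmark it contains two genuine gaps. The first is at the very start: the mapping cylinder neighborhood step is available only when $A$ is a compact ANR. A $\mathcal{Z}$-set in $Q$ is an arbitrary compactum --- a solenoid, say --- and such a set admits no mapping cylinder neighborhood (a compactum with a mapping cylinder neighborhood whose top is a compact $Q$-manifold is a retract of an ANR, hence itself an ANR), nor do its small neighborhoods deformation retract onto it, so your description of the end of $Q\setminus A$ by sub-cylinder complements collapses. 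The same restriction defeats the later appeal to West's finiteness theorem. Since the whole point of stating the theorem in terms of shape is to cover non-ANR compacta (for ANRs, shape equivalence is just homotopy equivalence), silently assuming ANR-ness loses the theorem rather than proving it.

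The second gap is the upgrade from shape data to a proper homotopy equivalence. A shape equivalence furnishes maps between cofinal neighborhood systems that commute only up to homotopies carrying no mutual compatibility or control; assembling them as you suggest yields in general only a \emph{weak} proper homotopy equivalence --- the composites are properly homotopic to the identity over each compactum separately, with a $\lim^1$-type obstruction to producing one global proper homotopy. Overcoming exactly this is the substance of the theorem, so your step ``a symmetric construction gives a proper homotopy inverse'' assumes what must be proved. Chapman's actual proof never passes through proper homotopy classification, triangulation, or torsion: using $\mathcal{Z}$-set unknotting he interweaves shrinking neighborhoods of $A$ and $B$ and builds the homeomorphism $Q\setminus A\to Q\setminus B$ directly as a limit of ambient homeomorphisms of $Q$, the infinite composition absorbing any would-be torsion by a swindle. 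A proof in the spirit you outline does exist --- Siebenmann reproved the complement theorem via infinite simple homotopy theory and collapsing --- but it needs precisely the controlled simple-homotopy machinery your last paragraph gestures at, and it cannot be abbreviated to ``topological invariance of Whitehead torsion plus West forces the obstructions to cancel.''
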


We do not use the notion of shape in full generality. We just recall that for homotopy equivalent spaces $A$ and $B$ we have
$\textrm{Sh}(A)=\textrm{Sh}(B)$.

\begin{theorem}\label{main1} 
Let $x,y\in Q\setminus S$ where $S\subseteq Q$ be such that it intersects every continuum from $x$ to $y$. Then $S$ is strongly infinite dimensional.
\end{theorem}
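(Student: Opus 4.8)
The plan is to reduce the statement to Corollary~\ref{cor} by trading the two points $x,y$ for the two faces $W_1^+,W_1^-$ via a homeomorphism of Hilbert-cube complements supplied by Theorem~\ref{chap}. First I would set $A=\{x,y\}$ and $B=W_1^+\cup W_1^-$. Both are $\mathcal{Z}$-sets: a point and a face are $\mathcal{Z}$-sets, and a finite union of $\mathcal{Z}$-sets is again a $\mathcal{Z}$-set. The faces are disjoint (no point has $z_1=1$ and $z_1=-1$) and each is a Hilbert cube, hence contractible, so $B$ is homotopy equivalent to the two-point space, and so is $A$. Thus $A$ and $B$ are homotopy equivalent, whence $\mathrm{Sh}(A)=\mathrm{Sh}(B)$, and Theorem~\ref{chap} yields a homeomorphism $h\colon Q\setminus A\to Q\setminus B$. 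Since strong infinite dimensionality is a topological invariant and $S\subseteq Q\setminus A$ is carried homeomorphically onto $h(S)\subseteq Q\setminus B$, it suffices to prove that $h(S)$ is strongly infinite dimensional; by Corollary~\ref{cor} it is enough to show that $h(S)$ meets every continuum from $W_1^+$ to $W_1^-$.

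The key tool is an end-matching lemma for $h$. Since $A$ and $B$ are compact and nowhere dense, $Q\setminus A$ and $Q\setminus B$ are locally compact and dense in $Q$. A sequence in $Q\setminus A$ that converges in $Q$ to a point of $A$ is exactly one that eventually leaves every compact subset of $Q\setminus A$; as $h$ is a homeomorphism, its image leaves every compact subset of $Q\setminus B$, so all of its limit points in $Q$ lie in $B$ (and symmetrically for $h^{-1}$). Combined with compactness this gives the uniform form: for every neighborhood $U$ of $A$ there is a neighborhood $V$ of $B$ with $h^{-1}(V\setminus B)\subseteq U$. Finally, writing $N_\delta^{\pm}=\{z\in Q:\pm z_1>1-\delta\}$ for the collars of the faces, the sets $N_\delta^{\pm}\setminus B$ are connected, and $x,y$ admit disjoint connected neighborhoods $U_x,U_y$; hence the connected set $h^{-1}(N_\delta^{\pm}\setminus B)$ must lie in a single $U_x$ or $U_y$ once $\delta$ is small, and since $h$ induces a bijection of the two ends of $Q\setminus A$ with the two ends of $Q\setminus B$, after relabeling $h^{-1}(N_\delta^{+}\setminus B)\subseteq U_x$ and $h^{-1}(N_\delta^{-}\setminus B)\subseteq U_y$. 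Informally, approaching $W_1^+$ corresponds under $h^{-1}$ to approaching $x$, and approaching $W_1^-$ to approaching $y$.

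With this in hand I would argue by contradiction. Suppose $C$ is a continuum meeting both $W_1^+$ and $W_1^-$ with $C\cap h(S)=\emptyset$, and set $K=\overline{h^{-1}(C\setminus B)}$, the closure taken in $Q$. Because $C\setminus B$ is closed in $Q\setminus B$, the set $h^{-1}(C\setminus B)$ is closed in $Q\setminus A$, so $K\setminus h^{-1}(C\setminus B)\subseteq A=\{x,y\}$; thus $K$ misses $S$, since $C\setminus B$ misses $h(S)$ and $x,y\notin S$. Next, $C$ meets $W_1^+$ but is not contained in it (it also meets the disjoint set $W_1^-$), so some point of $C\cap W_1^+$ is a limit of points of $C\setminus W_1^+$, which near that point avoid $W_1^-$ and hence lie in $C\setminus B$; by the end-matching lemma their $h^{-1}$-images converge to $x$, so $x\in K$, and likewise $y\in K$. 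It remains to see that $K$ is connected. If $K=K_1\sqcup K_2$ is a separation with $\{x,y\}\subseteq K_1$, then $K_2\subseteq h^{-1}(C\setminus B)$ is compact, so $h(K_2)$ is a nonempty proper clopen subset of $C$ disjoint from $B$, contradicting connectedness of $C$. If instead $x\in K_1$ and $y\in K_2$, the end-matching lemma forces the $Q$-closures of $h(K_1\setminus\{x\})$ and $h(K_2\setminus\{y\})$ to meet $B$ only in $W_1^+$ and only in $W_1^-$ respectively; these closures are then disjoint and cover $C$, again contradicting connectedness of $C$. Therefore $K$ is a continuum from $x$ to $y$ missing $S$, contrary to the hypothesis. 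This shows $h(S)$ meets every continuum from $W_1^+$ to $W_1^-$, and Corollary~\ref{cor} completes the proof.

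The hard part is the end-matching lemma together with its use in the last step: one must simultaneously guarantee that the reconstructed set $K$ is connected and that it actually attains both prescribed endpoints $x$ and $y$. Both of these rest on the single structural fact that $h$ pairs the two ends of $Q\setminus A$ bijectively with the two ends of $Q\setminus B$, so I expect the bulk of the technical work to go into making that end bijection and its uniform collar form precise.
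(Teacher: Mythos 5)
Your proposal is correct in substance and shares its skeleton with the paper's proof --- both apply Theorem~\ref{chap} to the $\mathcal{Z}$-sets $\{x,y\}$ and $W_1^+\cup W_1^-$ (same shape, each being homotopy equivalent to the two-point space) and then reduce to Corollary~\ref{cor} --- but the two arguments transfer continua in opposite directions. The paper extends the homeomorphism $f:Q\setminus(W_1^-\cup W_1^+)\to Q\setminus\{x,y\}$ to a continuous map of compactifications $\bar f:Q\to Q$ collapsing $W_1^-$ to $x$ and $W_1^+$ to $y$; this extension step is exactly where your ``end-matching lemma'' lives, since continuity of $\bar f$ at the faces is the statement that small collars of the faces map into small neighborhoods of the points. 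With $\bar f$ in hand the transfer is one line: a continuum $C$ from $W_1^+$ to $W_1^-$ pushes forward to the continuum $\bar f(C)$ from $x$ to $y$, which must meet $S$, so $C$ meets $\bar f^{-1}(S)$, which is homeomorphic to $S$, and Corollary~\ref{cor} applies. Your version runs the homeomorphism the other way and pulls continua back, which forces you to re-prove by hand that $K=\overline{h^{-1}(C\setminus B)}$ is connected and actually attains $x$ and $y$ --- precisely the work the pushforward makes automatic. So the paper's device buys brevity, while your route makes explicit the end-matching content that the paper compresses into the terse phrase ``minimality of the one point compactification''; spelling that out is a genuine virtue, and your uniform collar estimate and end bijection are correctly formulated and provable as you sketch.

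One step of yours is false as stated, though easily repaired: in the case $x\in K_1$, $y\in K_2$ you claim that the $Q$-closures $F_1=\overline{h(K_1\setminus\{x\})}$ and $F_2=\overline{h(K_2\setminus\{y\})}$ cover $C$. They need not: $C$ may contain points deep inside a face (for instance an arc lying inside $W_1^+$ and attached to the rest of $C$) which are not limit points of $C\setminus B$ and hence lie in neither closure. The fix is immediate: set $C_1=(F_1\cap C)\cup(C\cap W_1^+)$ and $C_2=(F_2\cap C)\cup(C\cap W_1^-)$. Your end-matching argument shows $F_1\cap B\subseteq W_1^+$ and $F_2\cap B\subseteq W_1^-$, so $C_1$ and $C_2$ are disjoint, closed, nonempty, and now do cover $C$, yielding the desired contradiction with the connectedness of $C$. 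With that one-line patch your argument is complete.
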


\begin{proof} Let $x,y\in Q\setminus S$. Applying the Theorem~\ref{chap} to $W_1^-\cup W_1^+$ and $\{x,y\}$ we obtain a homeomorphism $f:Q\setminus (W_1^-\cup W_1^+)\to Q\setminus \{x,y\}$. 
In view of the minimality of the one point compactification, this homeomorphism can be extended
to a continuous map of compactifications
$\bar{f}:Q\to Q$ with $\bar{f}(W_1^-)=x$, $\bar{f}(W_1^+)=y$ and $f|_{Q\setminus (W_1^-\cup W_1^+)}=f$.  
Note that $\bar f^{-1}( S)\subset Q\setminus (W_1^-\cup W_1^+)$ and $f$ defines a homeomorphism between $\bar f^{-1}(S)$ and $S$.
Since the image $\bar f(C)$ of a continuum $C$ from $W_1^+$ to $W_1^-$ is a continuum from $x$ to $y$ and $S\cap\bar f(C)\ne\emptyset$, the intersection $$\bar{f}^{-1}(S)\cap C=f^{-1}(S)\cap C=f^{-1}(S\cap f(C))=f^{-1}(S\cap\bar f(C))$$ is not empty.
Hence by Corollary~\ref{cor}, $\bar{f}^{-1}(S)$ is strongly infinite dimensional, and so is $S$. \qed
\end{proof}

Clearly, some strongly infinite dimensional compacta can separate the Hilbert cube. Thus, $Q\times\{0\}$ separate the Hilbert cube $Q\times[-1,1]$.
It could be that every strongly infinite dimensional compactum has this property. In other words, it is unclear if the converse to  Theorem~\ref{main1} holds true:
\begin{prob}
Does every strongly infinite dimensional compact metric space admit an embedding into the Hilbert cube $Q$ that separates
$Q$?
\end{prob}

\begin{rem}\label{rem} The preceding theorem in the case when $S$ is compact states precisely that if S is a weakly infinite dimensional subspace of $Q$, then $Q\setminus S$ is path connected. 
\end{rem}

\begin{definition}
A topological space $X$ has property $\mathcal{C}$ (is a  $\mathcal{C}$-space) if for every sequence $\mathcal{G}_1,\mathcal{G}_2,\dots$ of open covers of $X$, there exists a sequence $\mathcal{H}_1,\mathcal{H}_2,\dots$ of families of pairwise disjoint open subsets of $X$ such that for $i=1,2,\dots$ each member of $\mathcal{H}_i$ is contained in a member of $\mathcal{G}_i$ and the union $\bigcup_{i=1}^{\infty}\mathcal{H}_i$ is a cover of $X$. 
\end{definition}

The following is a theorem on dimension lowering mappings, the proof can be found in ~\cite{En} (Chapter 6.3, Theorem 9).

\begin{theorem}\label{cspace}
If $f:X\to Y$ is a closed mapping of space X to $\mathcal{C}$ space $Y$ such that for every $y\in Y$ the fibre $f^{-1}(y)$ is weakly infinite dimensional, then $X$ is weakly infinite dimensional.
\end{theorem}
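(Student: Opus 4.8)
The plan is to verify weak infinite dimensionality of $X$ directly from the definition. Thus I fix an arbitrary countable family $\{(A_i,B_i):i\in\mathbb N\}$ of pairs of disjoint closed subsets of $X$ and aim to produce separators $L_i$ between $A_i$ and $B_i$ with $\bigcap_{i}L_i=\emptyset$; as the family is arbitrary, this rules out an infinite essential family and shows that $X$ is weakly infinite dimensional. I first translate separators into a single map. Given any continuous $g=(g_i)\colon X\to Q$ with $g_i(A_i)=-1$ and $g_i(B_i)=1$ for all $i$, each $L_i:=g_i^{-1}(0)$ is a separator between $A_i$ and $B_i$, and $\bigcap_i L_i=g^{-1}(\mathbf 0)$, where $\mathbf 0$ denotes the centre of $Q$. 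Hence it suffices to construct a single map $g\colon X\to Q$ satisfying these boundary conditions and avoiding the point $\mathbf 0$.

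First I would solve the problem on each fibre. Fix $y\in Y$; the fibre $f^{-1}(y)$ is weakly infinite dimensional, so the restricted family $\{(A_i\cap f^{-1}(y),B_i\cap f^{-1}(y))\}$ is inessential in it, which via the translation above furnishes a map $g^y\colon f^{-1}(y)\to Q$ obeying the boundary conditions and with $\mathbf 0\notin g^y(f^{-1}(y))$. Since the requirements ``$g^y$ on $f^{-1}(y)$'', ``$-1$ on $A_i$'' and ``$1$ on $B_i$'' are compatible on the closed set $f^{-1}(y)\cup A_i\cup B_i$, the Tietze theorem (applied coordinatewise) extends $g^y$ to a map $G^y\colon X\to Q$ respecting all boundary conditions. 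Here is where closedness of $f$ enters: the set $(G^y)^{-1}(\mathbf 0)$ is closed in $X$ and disjoint from $f^{-1}(y)$, so $f\bigl((G^y)^{-1}(\mathbf 0)\bigr)$ is closed in $Y$ and omits $y$; putting $V_y:=Y\setminus f\bigl((G^y)^{-1}(\mathbf 0)\bigr)$ gives an open neighbourhood of $y$ on whose saturated preimage $f^{-1}(V_y)$ the map $G^y$ avoids $\mathbf 0$ entirely.

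Property $\mathcal C$ is then used to organise these local solutions over the base. Applying the $\mathcal C$-space property to the constant sequence of open covers $\mathcal G_i=\{V_y:y\in Y\}$ yields pairwise disjoint open families $\mathcal H_i$, with each member of $\mathcal H_i$ contained in some $V_y$, and with $\bigcup_i\mathcal H_i$ a cover of $Y$. Because the members of a single family $\mathcal H_i$ are disjoint, their saturated preimages are disjoint and the corresponding maps $G^{y(H)}$ piece together into one map $g^{(i)}$ on the open saturated set $U_i:=f^{-1}\bigl(\bigcup\mathcal H_i\bigr)$ that obeys the boundary conditions and avoids $\mathbf 0$; moreover $\{U_i\}_{i\in\mathbb N}$ is a countable open cover of $X$. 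The main obstacle is the last step: amalgamating the countably many maps $g^{(i)}\colon U_i\to Q$ into one global $g\colon X\to Q$ that still avoids $\mathbf 0$. A partition-of-unity average fails, since convex combinations of points of $Q\setminus\{\mathbf 0\}$ may equal $\mathbf 0$; and simply assigning distinct coordinate blocks of $Q$ to distinct layers is impossible, because the boundary conditions constrain every coordinate simultaneously and cannot be reindexed freely. Resolving this requires interleaving the pair index $i$ with the layer index so that each point of $X$ is pushed off $\mathbf 0$ at some finite stage while all boundary conditions are maintained, and it is precisely this bookkeeping --- guaranteeing that the infinite intersection $g^{-1}(\mathbf 0)$ is genuinely empty --- that uses the full strength of property $\mathcal C$ of $Y$ and forms the technical heart of the argument.
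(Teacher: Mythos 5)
First, a point of reference: the paper does not prove this statement at all --- it quotes it from Engelking~\cite{En} (Chapter 6.3, Theorem 9; cf.\ also~\cite{HY}), so your attempt must be measured against the standard proof from the literature. Measured that way, there is a genuine gap, and you concede it yourself: the amalgamation of the layer maps $g^{(i)}\colon U_i\to Q$ into a single map $g\colon X\to Q$ avoiding $\mathbf 0$ is never carried out, only labelled ``the technical heart of the argument.'' This is not a deferred technicality; it is the whole point, and it genuinely cannot be completed from the position you have set up. Because you applied property $\mathcal C$ to the \emph{constant} sequence of covers $\mathcal G_i=\{V_y:y\in Y\}$, every layer $\mathcal H_i$ tries to control \emph{all} coordinates of $Q$ at once; countably many maps into the same cube, each nonvanishing only over its own part of the base, admit no general merging procedure that keeps the coordinatewise boundary conditions (e.g.\ the diagonal choice $L_i=(g^{(i)}_i)^{-1}(0)$ fails: for $x$ over the $n$-th layer, the nonzero coordinate of $g^{(n)}(x)$ need not be a coordinate whose separator was taken from layer $n$).

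The idea you are missing is precisely the one you declare impossible: assigning disjoint coordinate blocks to distinct layers. Write $\mathbb N=N_1\cup N_2\cup\cdots$ with the $N_n$ pairwise disjoint and infinite. Since each fibre is weakly infinite dimensional, \emph{every} infinite subfamily restricted to it is inessential; so for each $n$ and each $y\in Y$ your fibrewise argument, run only on the pairs $\{(A_i,B_i):i\in N_n\}$, produces a map $G^{y,n}\colon X\to[-1,1]^{N_n}$ satisfying the boundary conditions for $i\in N_n$, together with (via closedness of $f$, exactly as you argued) a neighbourhood $V^n_y$ of $y$ over whose preimage $G^{y,n}$ misses the centre of $[-1,1]^{N_n}$. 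Property $\mathcal C$ is then applied to the genuinely varying sequence of covers $\mathcal G_n=\{V^n_y:y\in Y\}$ --- this is exactly why the definition is stated for a sequence of covers rather than a single one. Glue over the disjoint members of $\mathcal H_n$ as you did, take a closed shrinking $\{D_n\}$ of the countable open cover $\{\bigcup\mathcal H_n\}$ of the normal space $Y$, and Tietze--extend each coordinate $i\in N_n$ from the closed set $f^{-1}(D_n)\cup A_i\cup B_i$ to all of $X$, keeping $g_i(A_i)=-1$ and $g_i(B_i)=1$; the boundary conditions are imposed coordinate by coordinate and survive the extension, so there is no conflict --- contrary to your claim that the conditions ``cannot be reindexed freely.'' Finally, every $x\in X$ has $f(x)\in D_n\cap H$ for some $n$ and some $H\in\mathcal H_n$, whence $g_i(x)\neq 0$ for some $i\in N_n$; thus $\bigcap_i g_i^{-1}(0)=\emptyset$, the arbitrary countable family $\{(A_i,B_i)\}$ is inessential, and $X$ is weakly infinite dimensional.
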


If one uses weakly infinite dimensional spaces instead of $\mathcal{C}$ spaces the situation is less clear even in the case of
compact $Y$.
\begin{prob}\label{problem}
Suppose that a Lie group $G$ admits a free action by isometries on a metric space $X$ with compact metric 
weakly infinite dimensional orbit space
$X/G$. Does it follow that $X$ is weakly infinite dimensional?
\end{prob}
This is true for compact Lie groups in view of the slice theorem~\cite{Br}. It also true
for countable discrete groups~\cite{Pol}.

\section{Applications}

Now we proceed to generalize two theorems by Belegradek and Hu.  We use the following result proven in~\cite{BH}, Theorem 1.3.

\begin{theorem}\label{ref}
If K is a countable subset of $\mathcal{R}^{k}_{\ge 0}(\mathbb{R}^2)$ and $X$ is a separable metric space, then for any distinct points $x_1,x_2\in X$ and any distinct metrics $g_1,g_2\in \mathcal{R}^{k}_{\ge 0}(\mathbb{R}^2)\setminus K$ there is an embedding of $X$ into $\mathcal{R}^{k}_{\ge 0}(\mathbb{R}^2)\setminus K$ that takes $x_1, x_2$ to $g_1,g_2$ respectively.
\end{theorem}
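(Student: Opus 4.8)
The plan is to split the problem into a soft infinite-dimensional part, which absorbs the arbitrary space $X$, the countable set $K$, and the two prescribed values, and a hard geometric part, which produces one large family of admissible metrics. I would first isolate three standard facts about the separable Hilbert space $\ell^2$: every separable metric space embeds in $\ell^2$; the space $\ell^2$ is $2$-point homogeneous, meaning that for any two ordered pairs of distinct points there is a self-homeomorphism of $\ell^2$ carrying the first pair onto the second; and, since countable sets are $\sigma$-compact and hence negligible in an infinite-dimensional Fréchet space, the complement $\ell^2\setminus C$ of any countable set $C$ is homeomorphic to $\ell^2$. Together these give: for any countable $C\subset\ell^2$ and any two distinct points $a,b\in\ell^2\setminus C$ there is an embedding $e:X\to\ell^2\setminus C$ with $e(x_1)=a$ and $e(x_2)=b$. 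Indeed, fix a homeomorphism $\theta:\ell^2\setminus C\to\ell^2$ and any embedding $\iota:X\to\ell^2$; by $2$-point homogeneity there is a self-homeomorphism $\psi$ of $\ell^2$ with $\psi(\iota(x_1))=\theta(a)$ and $\psi(\iota(x_2))=\theta(b)$, and then $e:=\theta^{-1}\circ\psi\circ\iota$ is the required embedding.

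All the geometry is concentrated in the following claim, which I regard as the crux: there is a topological embedding $h:\ell^2\to\mathcal{R}^{k}_{\ge 0}(\mathbb{R}^2)$ together with two distinct points $a,b\in\ell^2$ such that $h(a)=g_1$ and $h(b)=g_2$. To build $h$ I would use one coordinate direction to interpolate and the remaining directions to spread out. Since $\mathcal{R}^{k}_{\ge 0}(\mathbb{R}^2)$ is path connected, join $g_1$ to $g_2$ by an arc of admissible metrics $\gamma:[0,1]\to\mathcal{R}^{k}_{\ge 0}(\mathbb{R}^2)$; this arc is the image of a segment from $a$ to $b$. For the transverse directions I would attach to each metric along the arc infinitely many independent modifications supported on a fixed sequence of pairwise disjoint disks $D_n\subset\mathbb{R}^2$ pushed off to infinity, the $n$-th modification controlled by the $n$-th transverse coordinate. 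Two curvature-preserving devices are available: small rotationally symmetric curvature caps inside the $D_n$, with total added curvature kept under the Cohn--Vossen bound $2\pi$; and pullbacks by compactly supported diffeomorphisms supported in the $D_n$, which change the metric tensor without changing its curvature at all. Disjointness of supports makes the dependence on parameters independent, and since each modification is detected on a fixed compact set the map is continuous with continuous inverse in the $C^k$-topology of uniform convergence on compacta, hence an embedding. Identifying $[0,1]\times\ell^2$ with $\ell^2$ then gives the desired $h$.

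Granting the claim, the theorem follows by composition. Put $C:=h^{-1}(K)$, which is countable because $h$ is injective and $K$ is countable, and note that $a,b\notin C$ since $h(a)=g_1$ and $h(b)=g_2$ lie in $\mathcal{R}^{k}_{\ge 0}(\mathbb{R}^2)\setminus K$. Choosing $e:X\to\ell^2\setminus C$ with $e(x_1)=a$ and $e(x_2)=b$ as in the first paragraph, the composite $h\circ e$ is an embedding of $X$ whose image lies in $h(\ell^2\setminus C)=h(\ell^2)\setminus K\subseteq\mathcal{R}^{k}_{\ge 0}(\mathbb{R}^2)\setminus K$ and which sends $x_1,x_2$ to $g_1,g_2$, exactly as required.

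The step I expect to be the genuine obstacle is the construction of $h$, specifically the requirement that the transverse modifications be realizable simultaneously and uniformly along the whole arc while respecting the global constraints. Nonnegative curvature is nonlinear and non-convex in the metric, and the Cohn--Vossen inequality forbids adding any curvature to a metric whose total curvature already equals $2\pi$, so the curvature-cap device must be used sparingly and the curvature-neutral diffeomorphism device becomes essential; but the latter fails to be injective at precisely those metrics with nontrivial isometry group, such as the rotationally symmetric ones, so the arc $\gamma$ and the supports $D_n$ must be chosen to avoid this degeneration and to furnish uniform room near infinity. Checking that the resulting parametrization is a homeomorphism onto its image, and not merely a continuous injection, is the final delicate point, and it is exactly here that the detailed analysis of the space carried out in~\cite{BH} is needed.
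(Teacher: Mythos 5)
First, a point of comparison: the paper does not prove this statement at all — it is imported verbatim from \cite{BH} (the sentence preceding it reads ``We use the following result proven in~\cite{BH}, Theorem 1.3''), so the only ``proof'' in the paper is that citation. Your soft reduction is correct and cleanly executed: embedding $X$ into $\ell^2$, negligibility of countable sets (so $\ell^2\setminus C\cong\ell^2$), and two-point homogeneity together do reduce the general statement to the single claim that some embedding $h\colon\ell^2\to\mathcal{R}^{k}_{\ge 0}(\mathbb{R}^2)$ passes through the two prescribed metrics. The trouble is that this claim \emph{is} the theorem (it is exactly the case $X=\ell^2$, $K=\emptyset$), so it carries the entire geometric content, and your sketch of it is not a proof. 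You yourself list the obstructions that break it: the Cohn--Vossen bound makes the curvature-cap device unusable whenever metrics along the arc already have total curvature $2\pi$ (e.g.\ a paraboloid-type $g_1$); the diffeomorphism-pullback device fails to be injective at metrics with symmetries; and injectivity of the combined map $(t,v)\mapsto\textrm{metric}$ — where the arc and the disk modifications can interfere — is never verified. Ending with ``it is exactly here that the detailed analysis of~\cite{BH} is needed'' concedes the crux as an IOU; at that point your argument is logically the same as the paper's (cite \cite{BH}), but routed through extra machinery.

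There is also a concrete technical flaw even granting the surgery devices. Your continuity-of-the-inverse argument reads off the $n$-th transverse parameter from the metric restricted to the compact disk $D_n$. In the topology of $C^k$ uniform convergence on compacta this yields only coordinatewise convergence of the parameters, i.e.\ convergence in the product topology, which on $\ell^2$ is strictly weaker than norm convergence; so what your construction would naturally produce is an embedding of $[0,1]\times\mathbb{R}^{\omega}$ with the product topology (or of a Hilbert cube), not of $\ell^2$ with its norm topology. This is repairable — $\mathbb{R}^{\omega}$ is homeomorphic to $\ell^2$ by Anderson's theorem, and your soft part works equally well with the pseudo-interior $s=(-1,1)^{\infty}$ — but as written the claim that $h$ is an embedding of $\ell^2$ does not follow from the localization argument. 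For the record, the mechanism in \cite{BH} is different in kind from your surgery picture: as Section 3 of this paper reflects, they parametrize complete nonnegatively curved metrics by the convex set $S_1$ of smooth subharmonic functions $u$ with $\alpha(u)\le 1$ inside the Fr\'echet space $C^{\infty}(\mathbb{R}^2)$, and obtain embedded Hilbert cubes and the prescribed-two-points property from the infinite-dimensional topology of convex sets, with no curvature modification at all.
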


Here is our extension of the first Belegradek-Hu theorem.
\begin{theorem}\label{appl1}
The complement of every weakly infinite dimensional subspace $S$ of $\mathcal{R}^{k}_{\ge 0}(\mathbb{R}^2)$ is continuum connected. If $S$ is closed, $\mathcal{R}^{k}_{\ge 0}(\mathbb{R}^2)\setminus S$ is path connected.
\end{theorem}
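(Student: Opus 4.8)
The plan is to transplant the whole problem into the Hilbert cube via the Belegradek--Hu embedding theorem and then invoke Theorem~\ref{main1}. Fix two metrics $g_1,g_2\in\mathcal{R}^{k}_{\ge 0}(\mathbb{R}^2)\setminus S$; we may assume $g_1\neq g_2$, since otherwise the single point $\{g_1\}$ is already the required continuum and the constant path already witnesses path connectedness. Choose two distinct points $x_1,x_2$ of the Hilbert cube $Q$ and apply Theorem~\ref{ref} with the separable metric space $X=Q$ and $K=\emptyset$ to obtain an embedding $e\colon Q\to\mathcal{R}^{k}_{\ge 0}(\mathbb{R}^2)$ with $e(x_1)=g_1$ and $e(x_2)=g_2$.

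Next I would pull $S$ back and set $S'=e^{-1}(S)$. Since $e$ is an embedding, $S'$ is homeomorphic to the subspace $S\cap e(Q)$ of $S$, and $x_1,x_2\notin S'$ because $g_1,g_2\notin S$. The key step is to verify that $S'$ is weakly infinite dimensional, for which it suffices to know that weak infinite dimensionality passes to arbitrary subspaces of separable metric spaces. I would prove this monotonicity in contrapositive form: given a countable essential family $\{(A_i,B_i)\}$ in a subspace $T\subseteq X$, use Urysohn functions together with the Tietze extension theorem to produce disjoint closed pairs $(A_i',B_i')$ in $X$ with $A_i\subseteq A_i'$ and $B_i\subseteq B_i'$; any separator $L_i$ of $A_i',B_i'$ in $X$ then meets $T$ in a separator of $A_i,B_i$, and since $\bigcap_i(L_i\cap T)\subseteq\bigcap_i L_i$, essentiality in $T$ forces essentiality in $X$. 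Hence a strongly infinite dimensional subspace makes the ambient space strongly infinite dimensional, so weak infinite dimensionality is inherited by $S\cap e(Q)$ and thus by $S'$. I expect this monotonicity to be the main point to pin down, though it is standard.

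With $S'$ weakly infinite dimensional and $x_1,x_2\in Q\setminus S'$, the contrapositive of Theorem~\ref{main1} yields a continuum $C\subseteq Q$ from $x_1$ to $x_2$ with $C\cap S'=\emptyset$. Its image $e(C)$ is a continuum from $g_1$ to $g_2$, and $e(C)\cap S=e\bigl(C\cap e^{-1}(S)\bigr)=e(C\cap S')=\emptyset$, so $e(C)\subseteq\mathcal{R}^{k}_{\ge 0}(\mathbb{R}^2)\setminus S$. This establishes continuum connectedness.

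Finally, for the case that $S$ is closed, the set $S'=e^{-1}(S)$ is closed in the compact space $Q$, hence compact, while remaining weakly infinite dimensional. Then Remark~\ref{rem} applies and gives that $Q\setminus S'$ is path connected; choosing a path from $x_1$ to $x_2$ inside $Q\setminus S'$ and composing it with $e$ produces a path from $g_1$ to $g_2$ lying in $e(Q)\setminus S\subseteq\mathcal{R}^{k}_{\ge 0}(\mathbb{R}^2)\setminus S$, as required.
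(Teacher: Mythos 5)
Your overall strategy is the same as the paper's: use Theorem~\ref{ref} to place $g_1,g_2$ in an embedded copy of the Hilbert cube, pull $S$ back, and conclude via the contrapositive of Theorem~\ref{main1} (and Remark~\ref{rem} in the closed case). The one step that is genuinely wrong is the monotonicity lemma you formulate: it is \emph{not} true that weak infinite dimensionality passes to arbitrary subspaces of separable metric spaces. R.~Pol's compactum --- the classical example of a weakly infinite dimensional compactum that is not countable dimensional --- is a compactification of the Rubin--Schori--Walsh totally disconnected, strongly infinite dimensional complete space, so it is a weakly infinite dimensional compact space containing a strongly infinite dimensional subspace. Your proposed proof of the general lemma breaks down exactly where it must: the Tietze extension theorem extends functions only from \emph{closed} subspaces, and for a non-closed $T\subseteq X$ disjoint closed subsets of $T$ need not admit disjoint closed supersets in $X$ at all (take $X=\mathbb{R}$, $T=\mathbb{R}\setminus\{0\}$, $A=[-1,0)$, $B=(0,1]$).

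Fortunately your application needs only the closed case, which is true: $e(Q)$ is compact, hence closed in the metric space $\mathcal{R}^{k}_{\ge 0}(\mathbb{R}^2)$, so $S\cap e(Q)$ is a \emph{closed} subspace of $S$. For a closed subspace $T\subseteq X$, disjoint closed subsets of $T$ are already disjoint and closed in $X$ (no Urysohn or Tietze functions are needed), and your restriction-of-separators argument then correctly shows that an essential family in $T$ remains essential in $X$; hence closed subspaces of weakly infinite dimensional spaces are weakly infinite dimensional. Once you replace ``arbitrary subspaces'' by ``closed subspaces'' and add the observation that $S\cap e(Q)$ is closed in $S$ because $e(Q)$ is compact, your proof is complete and is essentially identical to the paper's, which uses the same closed-heredity fact silently in the line asserting that $S\cap\hat{Q}$ is ``at most weakly infinite dimensional.''
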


\begin{proof}
Let $S$ be a weakly infinite dimensional subspace of $\mathcal{R}^{k}_{\ge 0}(\mathbb{R}^2)$. Fix two metrics $g_1, g_2 \in \mathcal{R}^{k}_{\ge 0}(\mathbb{R}^2)$. Theorem~\ref{ref} implies that $g_1, g_2$ lies in a subspace $\hat{Q}$ of $\mathcal{R}^{k}_{\ge 0}(\mathbb{R}^2)$ that is homeomorphic to $Q$. 
Since $S\cap \hat{Q}$ is at most weakly infinite dimensional, $\hat{Q}\setminus S$ is continuum connected by Theorem~\ref{main1}. Then $g_1, g_2$ lie in a continuum in $\hat{Q}$ that is disjoint from S. Hence $\mathcal{R}^{k}_{\ge 0}(\mathbb{R}^2)\setminus S$ is continuum connected. 
If $S$ is closed, from Remark~\ref{rem}, we can conclude that  $\mathcal{R}^{k}_{\ge 0}(\mathbb{R}^2)\setminus S$ is path connected.\qed
\end{proof}

In view of Theorem~\ref{main1} we can state that if a subset $S\subseteq \ell^2$, the separable Hilbert space, then $S$ is strongly infinite dimensional. From this fact we derive the following
\begin{theorem}
The complement of every weakly infinite dimensional subspace $S$ of $\mathcal{R}^{\infty}_{\ge 0}(\mathbb{R}^2)$ is locally connected. If $S$ is closed, $\mathcal{R}^{\infty}_{\ge 0}(\mathbb{R}^2)\setminus S$ is locally path connected.
\end{theorem}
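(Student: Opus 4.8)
The plan is to transfer the $\ell^2$ observation recorded above into two connectedness statements and then to localize them on $\mathcal{R}^{\infty}_{\ge 0}(\mathbb{R}^2)$ by a neighborhood-basis argument. First I would recast the observation as follows. Let $S\subseteq\ell^2$ be weakly infinite dimensional and let $x,y\in\ell^2\setminus S$. Choose a topologically embedded Hilbert cube $Q'\subseteq\ell^2$ with $x,y\in Q'$. Since $S\cap Q'$ is a subspace of $S$ it is weakly infinite dimensional, so by Theorem~\ref{main1} it cannot meet every continuum from $x$ to $y$ inside $Q'$; any such continuum is also a continuum of $\ell^2$, so there is a continuum from $x$ to $y$ lying in $\ell^2\setminus S$, and hence $\ell^2\setminus S$ is continuum connected. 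If moreover $S$ is closed in $\ell^2$, then $S\cap Q'$ is closed in the compact set $Q'$ and therefore a compact weakly infinite dimensional subspace of $Q'\cong Q$; Remark~\ref{rem} then produces a path from $x$ to $y$ in $Q'\setminus S\subseteq\ell^2\setminus S$, so $\ell^2\setminus S$ is path connected.

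Next I would invoke the structural input that $\mathcal{R}^{\infty}_{\ge 0}(\mathbb{R}^2)$ is an $\ell^2$-manifold, so that each point $p$ has an open neighborhood $U$ and a homeomorphism $\phi\colon U\to\ell^2$. For all sufficiently small $r>0$ the open ball $B(\phi(p),r)\subseteq\ell^2$ is homeomorphic to $\ell^2$, and the preimages $V_r=\phi^{-1}(B(\phi(p),r))$ form a neighborhood basis at $p$ consisting of open sets each homeomorphic to $\ell^2$. This local model, in place of the globally embedded Hilbert cubes available in the $C^k$ case, is what confines the conclusion to a local one.

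For the theorem, fix $p\in\mathcal{R}^{\infty}_{\ge 0}(\mathbb{R}^2)\setminus S$ and a basic neighborhood $V_r\cong\ell^2$ as above. The trace $S\cap V_r$ is a subspace of $S$, hence weakly infinite dimensional, so by the first paragraph (read inside $V_r$) the set $V_r\setminus S$ is continuum connected, in particular connected; moreover it is open in $\mathcal{R}^{\infty}_{\ge 0}(\mathbb{R}^2)\setminus S$. As $r$ decreases these connected open sets form a neighborhood basis at $p$, which is precisely local connectedness of the complement. If $S$ is closed, then $S\cap V_r$ is closed in $V_r$, so $V_r\setminus S$ is path connected by the closed case of the first paragraph, and the same neighborhood-basis argument yields local path connectedness. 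I expect the main obstacle to be the structural claim of the second paragraph---that $\mathcal{R}^{\infty}_{\ge 0}(\mathbb{R}^2)$ admits an $\ell^2$-manifold structure, so that arbitrarily small neighborhoods really are copies of $\ell^2$ on which Theorem~\ref{main1} can be used---together with justifying the monotonicity of weak infinite dimensionality used to control the traces $S\cap V_r$ and $S\cap Q'$.
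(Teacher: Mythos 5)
Your proposal is correct and follows essentially the same route as the paper: the paper likewise invokes the Belegradek--Hu fact that $\mathcal{R}^{\infty}_{\ge 0}(\mathbb{R}^2)$ with the $C^{\infty}$ topology is homeomorphic to $\ell^2$, takes a neighborhood of each point homeomorphic to $\ell^2$, and concludes that the trace of $S$ there cannot disconnect (resp.\ path-disconnect) it, the underlying mechanism being exactly your first paragraph---embedded Hilbert cubes through pairs of points combined with Theorem~\ref{main1} and Remark~\ref{rem}. You merely make explicit steps the paper leaves implicit (that small balls in $\ell^2$ are themselves copies of $\ell^2$, that any two points of $\ell^2$ lie in an embedded Hilbert cube, and the monotonicity of weak infinite dimensionality for the traces, which the paper also uses without comment in the proof of Theorem~\ref{appl1}).
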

\begin{proof}
Given $C^{\infty}$ topology, the space $\mathcal{R}^{\infty}_{\ge 0}(\mathbb{R}^2)$ is homeomorphic to $\ell^2$, the separable Hilbert space ~\cite{BH}. Let $x\in\ell^2$, then there is a neighborhood $U$ of $x$ homeomorphic to $\ell^2$, and the set $U\setminus S$ is connected, and path connected if $S$ is closed.\qed 
\end{proof}
We do not know if the space $\mathcal{R}^{k,c}_{\ge 0}(\mathbb{R}^2)$ locally path connected for $k<\infty$.

We prove a similar to Theorem~\ref{appl1} result for the associated moduli space $\mathcal{M}^{k,c}_{\ge 0}(\mathbb{R}^2)$, when the subspace removed is a Hausdorff space having the property $\mathcal{C}$. 
This is  a generalization of another  Belegradek-Hu theorem.

\begin{theorem}\label{appl2} 
If $S\subset \mathcal{M}^{k,c}_{\ge 0}(\mathbb{R}^2)$ is a closed Hausdorff space with property $\mathcal{C}$   then $\mathcal{M}^{k,c}_{\ge 0}(\mathbb{R}^2)\setminus S$ is path connected.
\end{theorem}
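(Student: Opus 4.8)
The plan is to transport the metric-space result Theorem~\ref{appl1} to the moduli space through the quotient map, the point being to arrange that the preimage of $S$ is still weakly infinite dimensional. Let $\pi:\mathcal{R}^{k}_{\ge 0}(\mathbb{R}^2)\to\mathcal{M}^{k,c}_{\ge 0}(\mathbb{R}^2)$ be the projection coming from the ${\rm Diff}(\mathbb{R}^2)$-action and put $\tilde S=\pi^{-1}(S)$. As $S$ is closed and $\pi$ continuous, $\tilde S$ is a closed, saturated subset of $\mathcal{R}^{k}_{\ge 0}(\mathbb{R}^2)$. If $\tilde S$ turns out to be weakly infinite dimensional, then the closed case of Theorem~\ref{appl1} makes $\mathcal{R}^{k}_{\ge 0}(\mathbb{R}^2)\setminus\tilde S$ path connected, and the theorem will follow by pushing paths down.

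To see that $\tilde S$ is weakly infinite dimensional I would invoke the dimension-lowering Theorem~\ref{cspace} for the restricted projection $f=\pi|_{\tilde S}:\tilde S\to S$, and this is precisely where the hypothesis of property $\mathcal{C}$ is spent. Three things must be verified. The target $S$ is a $\mathcal{C}$-space by assumption. Each fibre $f^{-1}(y)=\pi^{-1}(y)$ is a single orbit of the pullback action, and these orbits have to be shown weakly infinite dimensional. Finally $f$ has to be a closed map; I would get this by restricting $\pi$ to the closed saturated set $\tilde S$, using properness of the ${\rm Diff}(\mathbb{R}^2)$-action so that closed saturated sets have closed images, the Hausdorffness assumed of $S$ entering at this point. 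Granting the three, Theorem~\ref{cspace} yields weak infinite dimensionality of $\tilde S$.

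The concluding step is routine. Given distinct $x_1,x_2\in\mathcal{M}^{k,c}_{\ge 0}(\mathbb{R}^2)\setminus S$, pick metrics $g_1,g_2\in\mathcal{R}^{k}_{\ge 0}(\mathbb{R}^2)$ with $\pi(g_i)=x_i$; since $x_i\notin S$ the whole fibre $\pi^{-1}(x_i)$ avoids $\tilde S$, so $g_1,g_2\in\mathcal{R}^{k}_{\ge 0}(\mathbb{R}^2)\setminus\tilde S$. Join them by a path $\gamma$ in $\mathcal{R}^{k}_{\ge 0}(\mathbb{R}^2)\setminus\tilde S$, available from Theorem~\ref{appl1}. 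Because $\tilde S$ is saturated, $\gamma$ missing $\pi^{-1}(S)$ forces $\pi\circ\gamma$ to miss $S$, so $\pi\circ\gamma$ is the desired path from $x_1$ to $x_2$ in $\mathcal{M}^{k,c}_{\ge 0}(\mathbb{R}^2)\setminus S$.

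The hard part is the fibre condition. The acting group ${\rm Diff}(\mathbb{R}^2)$ is neither compact nor discrete, so the slice arguments mentioned near Problem~\ref{problem} are unavailable; moreover an orbit modeled on an infinite-dimensional Banach or Hilbert space would be strongly infinite dimensional, which would defeat Theorem~\ref{cspace}. Thus the crux is to show that the orbits of this particular action, with their subspace topology, are only weakly infinite dimensional, for which I would rely on the concrete description of the orbits and of the action in~\cite{BH}. By comparison, the closedness of $f$ and the interplay with the Hausdorff hypothesis on $S$ are secondary technical points.
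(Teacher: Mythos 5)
Your strategy breaks down at exactly the point you yourself flag as the crux, and it cannot be repaired within your setup: the fibres of $\pi:\mathcal{R}^{k}_{\ge 0}(\mathbb{R}^2)\to\mathcal{M}^{k,c}_{\ge 0}(\mathbb{R}^2)$ are whole ${\rm Diff}(\mathbb{R}^2)$-orbits, i.e.\ copies of ${\rm Diff}(\mathbb{R}^2)/{\rm Isom}(g)$. Since ${\rm Diff}(\mathbb{R}^2)$ is infinite dimensional while ${\rm Isom}(g)$ is a finite-dimensional Lie group, these orbits are infinite-dimensional manifolds and are strongly infinite dimensional, not weakly so. Consequently the fibre hypothesis of Theorem~\ref{cspace} fails for $f=\pi|_{\tilde S}$, and worse, $\tilde S=\pi^{-1}(S)$ is itself strongly infinite dimensional whenever $S\ne\emptyset$, so the closed case of Theorem~\ref{appl1} can never be applied to it; no ``concrete description of the orbits'' from \cite{BH} can rescue this, because the orbits of \emph{this} action really are huge. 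Your closedness argument is also unfounded: the ${\rm Diff}(\mathbb{R}^2)$-action is not proper --- if it were, the quotient $\mathcal{M}^{k,c}_{\ge 0}(\mathbb{R}^2)$ would be Hausdorff, whereas the entire reason the theorem carries a Hausdorff hypothesis on $S$ (see the remark following Theorem~\ref{appl2}) is that the moduli space need not be Hausdorff.

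The paper's proof avoids both obstacles by discarding the ${\rm Diff}$-quotient altogether and using the uniformization model from \cite{BH}: every class in the moduli space is represented as $e^{-2u}g_0$ with $u$ in the set $S_1\subset C^{\infty}(\mathbb{R}^2)$ of smooth subharmonic functions with $\alpha(u)\le 1$, and the resulting surjection $q:S_1\to\mathcal{M}^{k,c}_{\ge 0}(\mathbb{R}^2)$ has fibres which are orbits of the \emph{finite-dimensional} group ${\rm conf}(\mathbb{R}^2)$, hence finite dimensional. Given $g_1,g_2\notin S$, one places $q$-preimages $u_1,u_2$ inside an embedded Hilbert cube $\hat Q$ and then restricts $q$ to the compact set $\hat Q\cap q^{-1}(S)$: a continuous surjection between compacta is automatically closed (this is where the Hausdorffness of $S$ enters), its image $q(\hat Q)\cap S$ has property $\mathcal{C}$, and Theorem~\ref{cspace} yields that $\hat Q\cap q^{-1}(S)$ is weakly infinite dimensional. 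Theorem~\ref{main1} (via Remark~\ref{rem}) then provides a path from $u_1$ to $u_2$ in $\hat Q\setminus q^{-1}(S)$, which $q$ pushes down to the desired path in $\mathcal{M}^{k,c}_{\ge 0}(\mathbb{R}^2)\setminus S$. Note also that the localization to $\hat Q$ is what makes closedness of the map free of charge; your global approach would need it for the full projection, where it is unavailable. Finally, the question of whether one can work with the group action directly (as you propose) and weaken property $\mathcal{C}$ to weak infinite dimensionality is precisely the content of Problem~\ref{problem}, which the paper resolves only conditionally in its last Proposition --- a sign that your route, even for the finite-dimensional group ${\rm conf}(\mathbb{R}^2)$, runs into open territory.
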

\begin{proof} Denote by $S_1$ the set of smooth subharmonic functions with $\alpha(u)\le 1$ where $$\alpha(u)=\lim_{r\rightarrow \infty}\frac{\sup\{u(z):|z|=r\}}{\log r}.$$
Note that $S_1$ is closed in the Frech\'{e}t space $C^{\infty}(\mathbb{R}^2)$, it is not locally compact, and is equal to the set of smooth subharmonic functions $u$ such that the metric $e^{-2u}g_0$ is complete \cite{BH}. Let $q:S_1\rightarrow \mathcal{M}^{k,c}_{\ge 0}(\mathbb{R}^2)$ denote the continuous surjection sending $u$ to the isometry class of $e^{-2u}g_0$. Let $\hat{S}=q^{-1}(S)$. Fix two points $g_1, g_2\in \mathcal{M}^{k,c}_{\ge 0}(\mathbb{R}^2)\setminus S$. which are $q$ images of $u_1, u_2$ in $S$, respectively. By Theorem ~\ref{ref} we may assume that $u_1, u_2$ lie in an embedded copy $\hat{Q}$ of Hilbert cube. It suffices to show that $ \hat{Q}\setminus \hat{S}$ is path connected.

The set $\hat{Q}\cap\hat{S}$ is compact, hence $\hat{q}$, the restriction of $q$ to $\hat{Q}\cap\hat{S}$ is a continuous surjection. The map $\hat{q}:\hat{Q}\cap\hat{S}\rightarrow q(\hat{Q})\cap S$ is a map between compact spaces, and in particular, it is a closed map. The set $\hat{q}(\hat{Q}\cap\hat{S})$ has property $\mathcal{C}$. We have each fiber $q^{-1}(y)$ to be finite dimensional~\cite{BH}, and hence $\hat{Q}\cap\hat{S}$ is weakly infinite dimensional by Theorem~\ref{cspace}. Therefore, $\hat{Q}\setminus\hat{S}$ is path connected by the discussion following Theorem~\ref{main1}, and so is $ \mathcal{M}^{k}_{\ge 0}(\mathbb{R}^2)\setminus S$ .
\qed
\end{proof}

It should be noted that the Hausdorff condition is essential in Theorem~\ref{appl2}.
If $S$ is not Hausdorff, the map $\hat q$ above ceases to be a map between compact metric spaces. In the paper ~\cite{BH} the authors omitted the Hausdorff condition in the formulation of their
Theorem 1.6 but they use it implicitly in the proof. 

\begin{proposition}
Suppose that Problem~\ref{problem} has an affirmative answer for the Lie group $G=\textrm{conf}(\mathbb{R}^2)$. Then in Theorem~\ref{appl2} one can replace the property $\mathcal C$ condition to the weak infinite dimensionality of $S$.
\end{proposition}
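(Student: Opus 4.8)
The plan is to rerun the proof of Theorem~\ref{appl2} word for word up to the moment at which the compactum $\hat{Q}\cap\hat{S}$ is shown to be weakly infinite dimensional, and to replace the single appeal to the dimension-lowering Theorem~\ref{cspace} for $\mathcal{C}$-spaces by the hypothesized affirmative answer to Problem~\ref{problem}. The conceptual input is that the surjection $q:S_1\to\mathcal{M}^{k,c}_{\ge 0}(\mathbb{R}^2)$ sending $u$ to the isometry class of $e^{-2u}g_0$ is exactly the orbit map of the action of $G=\textrm{conf}(\mathbb{R}^2)$ on $S_1$: an isometry between two conformal metrics $e^{-2u_1}g_0$ and $e^{-2u_2}g_0$ is forced to be a $g_0$-conformal diffeomorphism, so two functions have the same $q$-image precisely when they lie in a common $G$-orbit. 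In particular the fibres $q^{-1}(y)$ are the $G$-orbits, and they are finite dimensional because $G$ is a finite dimensional Lie group.

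Concretely, assume now that $S$ is closed, Hausdorff and weakly infinite dimensional. As in Theorem~\ref{appl2}, fix $g_1,g_2\in\mathcal{M}^{k,c}_{\ge 0}(\mathbb{R}^2)\setminus S$, lift them to $u_1,u_2\in S_1$, use Theorem~\ref{ref} to place $u_1,u_2$ in an embedded Hilbert cube $\hat{Q}\subset S_1$, and put $\hat{S}=q^{-1}(S)$; it suffices to show that $\hat{Q}\setminus\hat{S}$ is path connected. Set $Y_0=q(\hat{Q})\cap S$. Since $q(\hat{Q})$ is compact and $S$ is closed and Hausdorff, $Y_0$ is a compact metrizable space, and as a closed subspace of the weakly infinite dimensional $S$ it is itself weakly infinite dimensional. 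The set $X_0=q^{-1}(Y_0)$ is $G$-invariant and its orbit map is $q|_{X_0}:X_0\to Y_0$, so $X_0/G=Y_0$ is a compact metric weakly infinite dimensional space. Granting Problem~\ref{problem} for $G=\textrm{conf}(\mathbb{R}^2)$, the total space $X_0$ is weakly infinite dimensional. Because $\hat{Q}\cap\hat{S}$ is a closed subset of the compact cube $\hat{Q}$ and is contained in $X_0$, it inherits weak infinite dimensionality. Thus $\hat{Q}\cap\hat{S}$ is a weakly infinite dimensional compactum inside the Hilbert cube $\hat{Q}$, and Remark~\ref{rem} gives that $\hat{Q}\setminus\hat{S}$ is path connected; projecting by $q$ a path from $u_1$ to $u_2$ in $\hat{Q}\setminus\hat{S}$ yields a path from $g_1$ to $g_2$ in $\mathcal{M}^{k,c}_{\ge 0}(\mathbb{R}^2)\setminus S$.

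The step I expect to be the main obstacle is matching the $G$-action on $X_0$ with the precise hypotheses of Problem~\ref{problem}, namely that $G$ act \emph{freely} and \emph{by isometries}. Freeness fails exactly on the metrics with nontrivial isometry group---rotationally symmetric metrics, for instance, have a nontrivial stabilizer in $G$---so one must either discard this locus, verifying that it does not affect the weak infinite dimensionality count, or else invoke a version of Problem~\ref{problem} that tolerates compact stabilizers. The isometry requirement forces one to equip $X_0$ with a $G$-invariant metric inducing its given topology; weak infinite dimensionality being a topological invariant, the particular choice is immaterial once the metric induces the correct topology and quotient. Provided these two conditions are arranged so that Problem~\ref{problem} genuinely applies, every remaining step is identical to the proof of Theorem~\ref{appl2}, the weak infinite dimensionality of $\hat{Q}\cap\hat{S}$ now issuing from the quotient structure rather than from property~$\mathcal{C}$.
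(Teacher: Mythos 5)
Your proposal is essentially the paper's own proof. The paper makes the identical reduction: it notes (citing the proof of Theorem 1.6 in~\cite{BH}) that two functions in $S_1$ have the same $q$-image iff they lie in the same orbit of the action $(u,\psi)\mapsto u\circ\psi-\log|a|$ of $\textrm{conf}(\mathbb{R}^2)$, places the lifts $u_1,u_2$ in an embedded Hilbert cube, and applies the assumed answer to Problem~\ref{problem} to exactly your set $X_0=q^{-1}(q(\hat Q)\cap S)$ --- which the paper writes as $\pi^{-1}(Y)$ via the factorization $q=\eta\circ\pi$ through the orbit space $S_1/\textrm{conf}(\mathbb{R}^2)$, with $Y=\eta^{-1}(q(\hat Q)\cap S)$ --- concluding as you do that $\hat Q\cap\hat S$ is weakly infinite dimensional and invoking the complement-of-a-compactum result. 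Your bookkeeping is in fact slightly more careful than the paper's: you verify that the orbit space $Y_0$ is compact, metrizable and weakly infinite dimensional, whereas the paper only asserts the last property of $Y$. The one point worth highlighting is the obstacle you flag at the end. The paper disposes of it in a single unproved sentence: ``the action of $\textrm{conf}(\mathbb{R}^2)$ on $S_1$ is a free action by isometries.'' Your suspicion about freeness is well founded: $u=0$ lies in $S_1$ and is fixed by every $\psi(z)=az+b$ with $|a|=1$, since then $u\circ\psi-\log|a|=0$; more generally the stabilizer of $u$ consists of the conformal isometries of $e^{-2u}g_0$, which is nontrivial for every rotationally symmetric metric. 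Likewise the paper never exhibits a $\textrm{conf}(\mathbb{R}^2)$-invariant metric on $S_1$ inducing its topology. So the caveat in your last paragraph is not a defect of your write-up relative to the paper; it is a genuine gap in the paper's own argument that you have correctly isolated, and any honest completion of either proof must address it (e.g.\ by restricting to the locus where the action is free, or by strengthening Problem~\ref{problem} to tolerate the stabilizers that actually occur).
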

\begin{proof}

We use the same setting as in the proof of theorem~\ref{appl2}. As stated in the proof of theorem 1.6 in~\cite{BH}, two functions $u$ and $v$ of $S_1$ lie in the same isometry class if and only if $v=u\circ\psi -\log|a|$ for some $\psi\in \textrm{conf}(\mathbb{R}^2)$. i.e, they lie in the same orbit under the action of $\textrm{conf}(\mathbb{R}^2)$ on the space $C^{\infty}(\mathbb{R}^2)$ given by $(u,\psi)\mapsto u\circ \psi -\log|a|$. The subspace $S_1$ of $C^{\infty}(\mathbb{R}^2)$ is invariant under this action. Let $\pi:S_1\to S_1/\textrm{conf}(\mathbb{R}^2)$ be the projection onto the orbit space of this action. Also we note that the action of $\textrm{conf}(\mathbb{R}^2)$ on $S_1$ is a free action by isometries.\\
Let $S$ be a closed, weakly infinite dimensional Hausdorff subset of $\mathcal{M}^k_{\ge0}(\mathbb{R}^2)$. Let $f$ and $g$ be two elements in the complement of $S$. Then there are functions $u$ and $v$ mapping to $f$ and  $g$ respectively by $q$. As noted above, $u$ and $v$ lend in the same class if and only if $v=u\circ \psi -\log|a|$ for some $\psi\in \textrm{conf}(\mathbb{R}^2)$. Theorem 1.4 of~\cite{BH} shows that $u$ and $v$ lie in an embedded copy $Q$ of Hilbert cube. Denote $q^{-1}(S)=\hat{S}$. It suffices to prove that $Q\cap\hat{S}$ is weakly infinite dimensional, so we would have a path joining $u$ and $v$ in $Q\setminus\hat{S}$, which transforms to a path joining $g$ and $f$ in $\mathcal{M}^k_{\ge0}(\mathbb{R}^2)\setminus S$. \\
The set $\hat{S}$ is closed hence $Q\cap\hat{S}$ is compact. So the restriction of $q$ to $Q\cap\hat{S}$ is a continuous surjection $\hat{q}: Q\cap\hat{S}\to q(Q)\cap S$ of compact separable metric spaces.
Define the map $\eta:S_1/\textrm{conf}(\mathbb{R}^2)$ by $uG\mapsto u^*$, the isometry class of $e^{-2u}g_0$. This map is injective by definition and the diagram 

 \[
    \xymatrix{ S_1 \ar[rr]^{\pi}\ar[dr]_{q}& &S_1/\textrm{conf}(\mathbb{R}^2)\ar[dl]^{\eta}�\\
    & \mathcal{M}_{\ge0}^{k} &
    }
    \]
 commutes. Let $Y$ be the $\eta$ preimage of $q(Q)\cap S$ in $S_1/\textrm{conf}(\mathbb{R}^2)$. The action restricted to the preimage $\pi^{-1}(Y)$ of $S_1$ is an action of $\textrm{conf}(\mathbb{R}^2)$ on $\pi^{-1}(Y)$ with orbit space $Y$, and $Q\cap\hat{S}\subseteq\pi^{-1}(Y)$, and the set $Y$ is weakly infinite dimensional. Assuming that the Problem~\ref{problem} has an affirmative answer for the Lie group $G=\mathbb{C}^*\rtimes \mathbb{C}$, we can say that Problem~\ref{problem} has an affirmative answer for the Lie group conf$(\mathbb{R}^2)$. By this, we can conclude that $\pi^{-1}(Y)$ is weakly infinite dimensional. Hence $q(Q)\cap\hat{S}$ is weakly infinite dimensional, therefore  $\mathcal{M}^k_{\ge0}(\mathbb{R}^2)\setminus S$ is path connected.
\qed
\end{proof}
\section{Acknowledgements}

I am grateful to my advisor, Alexander Dranishnikov, for all his supports and insights given throughout in preparation of this paper.

\end{document}